\numberwithin{equation}{section}
\theoremstyle{plain}
\newtheorem{theorem}{Theorem}[section]
\theoremstyle{definition}
\newtheorem{example}[theorem]{Example}
\newtheorem{question}[theorem]{Question}
\definecolor{internalLink}{rgb}{0,0,0.5}
\definecolor{citeLink}{rgb}{0,0.5,0}
\definecolor{urlLink}{rgb}{0,0.5,0.5}
\newcommand*{\email}[1]{%
	\normalsize\href{mailto:#1}{#1}\par
}
\providecommand{\keywords}[1]{\textbf{\textit{Key words and phrases:}} #1}
\providecommand{\subjclass}[1]{\textbf{\textit{2020 Mathematics Subject Classification:}} #1}
\title{Minimal generating set of planar moves for surfaces embedded in the four-space}
\author{Micha\l \;Jab\l onowski}
\affil{\small Institute of Mathematics, Faculty of Mathematics, Physics and Informatics,\\ University of Gda\'nsk, 80-308 Gda\'nsk, Poland\\ \email{michal.jablonowski@gmail.com}}
\date{\today}
\begin{document}
	
\maketitle

\begin{abstract}
We derive a minimal generating set of planar moves for diagrams of surfaces embedded in the four-space. These diagrams appear as the bonded classical unlink diagrams.
\end{abstract}

\section{Introduction}

There is a set of ten planar moves $\{\Omega_1, \ldots, \Omega_8, \Omega_4',\Omega_6'\}$ for surface-links introduced by K.~Yoshikawa \cite{Yos94}, and proven by F.J.~Swenton, C.~Kearton and V.~Kurlin \cite{Swe01, KeaKur08} to be a generating set of moves between any diagrams of equivalent surface-links. However, it is still an open problem whether this set is minimal, in particular it is not known if any move from the set $\{\Omega_4,\Omega_4',\Omega_5\}$ is independent from the other nine moves, see \cite{JKL15} for more details.
\par
In this paper we introduce planar moves for surface bonded link diagrams that generates moves between any surface bonded link diagrams of equivalent surface-links, and prove the minimality of this set.

\begin{theorem}\label{tw:main}
	Two surface bonded link diagrams are related by a planar isotopy and a finite sequence of moves from the set $\mathcal{M}=\{M1,\ldots, M12\}$ depicted in Fig.\;\ref{michal_02} if and only if they represent equivalent surface-links. Moreover, any move from $\mathcal{M}$ is independent from the other moves in $\mathcal{M}$.
\end{theorem}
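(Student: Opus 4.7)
The plan is to prove the two halves of the theorem separately. For the first half (that the moves in $\mathcal{M}$ suffice to pass between any two surface bonded link diagrams of equivalent surface-links), the natural route is a reduction to the already-known Yoshikawa/Swenton/Kearton--Kurlin generating set of ten planar moves on marked graph diagrams. For the independence half, I would use the standard strategy of invariant-based obstruction: manufacture, for each move separately, a quantity that detects exactly that move.

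For completeness, I would first fix a precise dictionary between surface bonded link diagrams and Yoshikawa marked graph (ch-)diagrams: roughly, a bond in a bonded classical unlink diagram encodes the saddle arc of a marked vertex, while the unlinked classical components encode the minima and maxima of the broken surface diagram. Under this dictionary, the ten Yoshikawa moves $\Omega_1,\ldots,\Omega_8,\Omega_4',\Omega_6'$ should correspond, one by one, to explicit short sequences of moves from $\mathcal{M}$, and conversely each $M_i$ should realize an equivalence of the underlying surface-link. Verifying this pictorially in both directions, and invoking the theorem of Swenton \cite{Swe01} and Kearton--Kurlin \cite{KeaKur08}, then closes the ``if and only if''.

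For the minimality statement, the plan is, for each $i\in\{1,\ldots,12\}$, to exhibit an invariant $\nu_i$ of surface bonded link diagrams, typically $\mathbb{Z}_2$-valued and defined using a count of certain local features (such as the parity of crossings, bonds, or components, writhe-type sums, or elementary quandle-type coloring counts), that is preserved by every move $M_j$ with $j\neq i$ but is strictly altered by $M_i$. Producing two diagrams related by $M_i$ on which $\nu_i$ differs then obstructs expressing $M_i$ as a composition of the remaining eleven moves. Several invariants may be recycled across different indices when supplemented with suitable parity or boundary conditions.

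The main obstacle I expect is precisely this independence half: producing twelve invariants, each sensitive to exactly one move, is delicate because many natural local counts change simultaneously under several moves in $\mathcal{M}$. The likely remedy is to combine elementary $\bmod\,2$ counts with a family of coloring invariants whose axioms are calibrated so that precisely one of the twelve local pictures fails to preserve them; the bulk of the careful case analysis will consist in verifying this calibration against each of the remaining eleven moves, and, if a single invariant cannot be tuned to isolate a given move, combining two of them into a vector-valued obstruction.
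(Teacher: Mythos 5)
Your proposal follows essentially the same route as the paper: the ``if and only if'' is reduced, via the bandaging/unbandaging dictionary between bonds and marked vertices, to the Swenton and Kearton--Kurlin generating theorem for the ten Yoshikawa moves, and independence is established by twelve move-specific semi-invariants (parity counts of crossings, bond colors, component counts after resolutions, and more elaborate counts of ``blocked'' bond crossing configurations for $M6$ and $M7$) together with explicit witness pairs of diagrams. The only caveat is that your outline defers the actual construction of the twelve invariants, which is where the real work of the paper lies --- in particular $M6$ and $M7$ require bespoke combinatorial invariants rather than off-the-shelf parity or coloring counts.
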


\begin{figure}[ht]
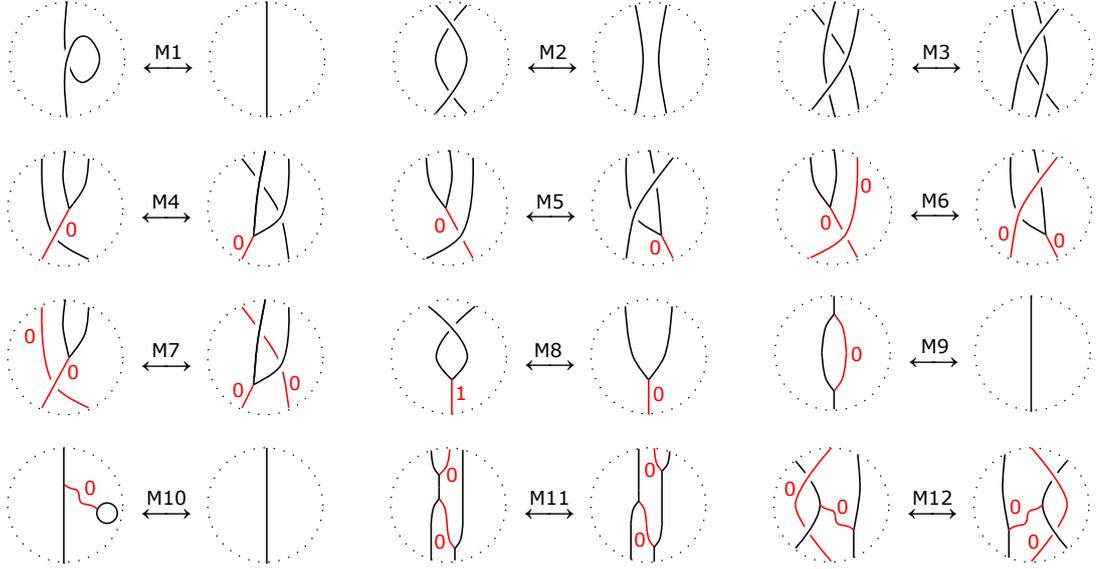

	\begin{center}
		\begin{lpic}[]{michal_02(14.3cm)}

		\end{lpic}
		\caption{Moves on surface bonded link diagrams.\label{michal_02}}
	\end{center}
\end{figure}

Toward the end of this paper we show two examples of known unknotted surface-link diagrams and transform them to the simple closed curves without using the $M12$ move. Minimal generating set of moves in three-space (in terms of links with bands) for surfaces embedded in the four-space was obtained by the author in \cite{Jab20}. Minimal generating set of moves in three-space (in terms of broken surface diagrams) for surfaces embedded in the four-space was obtained by K.~Kawamura in \cite{Kaw15}. For transformations of some diagrams we use F.~Swenton's Kirby calculator \cite{KLO19}.

\section{Preliminaries}

For the case of surfaces in manifolds $\mathbb{S}^4$ and $\mathbb{R}^4$, we will work in the standard smooth category (with maps of class $C^{\infty}$). An embedding (or its image when no confusion arises) of a closed (i.e. compact, without boundary) surface $F$ into the Euclidean $\mathbb{R}^4$ (or into the $\mathbb{S}^4=\mathbb{R}^4\cup\{\infty\}$) is called a \emph{surface-link} (or \emph{surface-knot} if it is connected). A surface-knot homeomorphic to the $\mathbb{S}^2$ is called a \emph{$2$-knot}. When it is homeomorphic to a torus or a projective plane, it is called a \emph{$\mathbb{T}^2$-knot} or a \emph{$\mathbb{P}^2$-knot}, respectively.
\par
Two surface-links are \emph{equivalent} (or have the same \emph{type} denoted also by $\cong$) if there exists an orientation preserving homeomorphism of the four-space $\mathbb{R}^4$ to itself (or equivalently auto-homeomorphism of the four-sphere $\mathbb{S}^4$), mapping one of those surfaces onto the other. We will use a word \emph{classical} referring to the theory of embeddings of circles $S^1\sqcup\ldots\sqcup S^1\hookrightarrow \mathbb{R}^3$ modulo ambient isotopy in $\mathbb{R}^3$ with their planar or spherical regular projections.
\par
To describe surface-links in $\mathbb{R}^4$, we will use \emph{hyperplane cross-sections} $\mathbb{R}^3\times\{t\}\subset\mathbb{R}^4$ for $t\in\mathbb{R}$, denoted by $\mathbb{R}^3_t$. This method (called \emph{motion picture method}) introduced by Fox and Milnor was presented in \cite{Fox62}. By a general position argument the intersection of $\mathbb{R}^3_t$ and a surface-link $F$ can (except in the finite cases) be either empty or a classical link. In the finite singular cases the intersection can be a single point or a four-valent embedded graph, where each vertex corresponds to a \emph{saddle point}. For more introductory material on this topic refer to \cite{CKS04}.

\subsection{Hyperbolic splitting, marked graph diagrams and Yoshikawa moves}

\begin{theorem}[\cite{Lom81}, \cite{KSS82}, \cite{Kam89}]
Any surface-link $F$ admits a \emph{hyperbolic splitting}, i.e. there exists a surface-link $F'$ satisfying the following: $F'$ is equivalent to $F$ and has only finitely many Morse's critical points, all maximal points of $F'$ lie in $\mathbb{R}^3_1$, all minimal points of $F'$ lie in $\mathbb{R}^3_{-1}$, all saddle points of $F'$ lie in $\mathbb{R}^3_0$.
\end{theorem}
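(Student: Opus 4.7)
The plan is to reduce the statement to a Morse-theoretic assertion about the height function $h = t|_F$, where $t : \mathbb{R}^4 \to \mathbb{R}$ is the projection onto the last coordinate, and then to rearrange its critical levels by an ambient isotopy of $\mathbb{R}^4$. First I would apply a generic $C^\infty$-small perturbation of the embedding so that $h$ becomes a Morse function. Since $F$ is closed, such a perturbation, realized as an ambient isotopy of $\mathbb{R}^4$ supported in a tubular neighborhood of $F$, produces an equivalent surface-link with only finitely many critical points, each of index $0$ (local minimum), $1$ (saddle), or $2$ (local maximum).

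Next I would separate the critical points by index. The key claim is that whenever $p,q$ are critical points of $h$ with $\mathrm{ind}(p) > \mathrm{ind}(q)$ and $t(p) < t(q)$, there exists an ambient isotopy of $\mathbb{R}^4$, supported in a thin slab $\mathbb{R}^3 \times [t(p) - \varepsilon,\, t(q) + \varepsilon]$, that carries $F$ to an equivalent surface on which the heights of $p$ and $q$ are swapped while all other critical values are preserved. The argument is a vertical shear along a short arc in $\mathbb{R}^4$ joining the two critical points, chosen disjoint from the rest of $F$; such an arc exists because $F$ is two-dimensional in a four-dimensional ambient space. Performing finitely many such swaps rearranges the levels so that all minima lie below all saddles and all saddles lie below all maxima.

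Finally I would consolidate each index class onto a single hyperplane. After the rearrangement, there are real numbers $a < b$ such that the minima lie in $(-\infty, a)$, the saddles in $(a, b)$, and the maxima in $(b, \infty)$. Applying a diffeomorphism of $\mathbb{R}$ to the $t$-coordinate sends these intervals into small neighborhoods of $-1$, $0$, $1$ respectively. Within each neighborhood, a local vertical push supported in disjoint small balls around the critical points moves each one precisely onto the target level; the disjoint local pushes combine into a single ambient isotopy of $\mathbb{R}^4$, yielding the required $F'$.

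The main obstacle is the separation step: one must verify that the vertical shear swapping two critical points can be realized as an ambient isotopy of $\mathbb{R}^4$ without introducing new critical points of $h$ or self-intersections of $F$. This is exactly where codimension two is essential — there is enough freedom in $\mathbb{R}^3$ to find a tubular neighborhood of the connecting arc that avoids $F$ away from $p$ and $q$, and hence to support the shear. The analogous statement for curves in $\mathbb{R}^2$ or surfaces in $\mathbb{R}^3$ would already fail, so the proof genuinely uses the ambient dimension.
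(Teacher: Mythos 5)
The paper offers no proof of this statement; it is quoted from \cite{Lom81}, \cite{KSS82}, \cite{Kam89}, and your three-step outline (perturb the height function to a Morse function, rearrange critical levels by index, then consolidate each index class onto the levels $-1$, $0$, $1$) is indeed the skeleton of the standard argument in those references. The first and third steps are fine: a $C^\infty$-small perturbation is realized ambiently by the isotopy extension theorem, and the final normalization by a reparametrization of the $t$-coordinate together with disjoint local pushes is unproblematic.

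The gap is in your separation step, which is the only step with real content. As stated, the mechanism does not work: an arc ``joining the two critical points'' has both endpoints on $F$, so a shear supported in a tubular neighborhood of it cannot be ``disjoint from the rest of $F$ away from $p$ and $q$,'' and it is not clear why simultaneously swapping the two heights would leave the embedding type and the remaining critical values untouched. More seriously, the dimension count you invoke is the wrong one and, taken at face value, points the other way: the trace of the dragged neighborhood is a $2$-dimensional tube in the slab, and $2+2=4$, so a generic such tube \emph{does} meet $F$ in isolated points; genericity in $\mathbb{R}^4$ alone does not save you. The correct move is to lower (or raise) \emph{one} critical point at a time: replace a small cap around a minimum by a long thin vertical finger whose core is an arc $t\mapsto(\gamma(t),t)$ chosen so that $\gamma(t)$ avoids the cross-section $F\cap\mathbb{R}^3_t$ for every intermediate $t$. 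Such a path exists because each generic cross-section is a $1$-complex in $\mathbb{R}^3$, whose complement is open and connected, and this level-by-level connectedness (not transversality in $\mathbb{R}^4$) is the actual use of codimension. You need to state and use this, and also to check that the finger introduces no new critical points (it does not, being vertical away from its cap). Your closing claim that the analogue already fails for curves in $\mathbb{R}^3$ is also off --- bridge position exists for classical links by the same dragging argument --- which suggests the dimension count has not been pinned down precisely.
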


\begin{wrapfigure}{r}{0.5\textwidth}
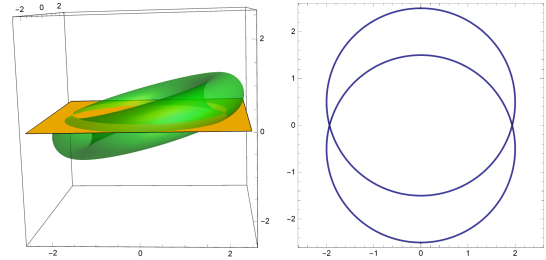

	\begin{center}
		\begin{lpic}[]{michal_05(7cm)}
			
		\end{lpic}
		\caption{A hyperbolic splitting of a standard torus and its zero cross-section.\label{michal_05}}
	\end{center}
\end{wrapfigure}

\begin{example}\label{exa1}
	An example of hyperbolic splitting and its zero cross-section is presented in Fig.\;\ref{michal_05}. It is obtained by a rotation of the standard embedding of a trivial torus.
\end{example}

The zero cross-section $\mathbb{R}^3_0\cap F'$ of the surface $F'$ in the hyperbolic splitting described above gives us then a $4$-regular graph. We assign to each vertex a \emph{marker} that informs us about one of the two possible types of saddle points (see Fig.\;\ref{michal_06}) depending on the shape of the cross-section $\mathbb{R}^3_{-\epsilon}\cap F'$ or $\mathbb{R}^3_{\epsilon}\cap F'$ for a small real number $\epsilon>0$. The resulting (rigid-vertex) graph is called a \emph{marked graph} presenting $F$.

\begin{wrapfigure}{r}{0.5\textwidth}
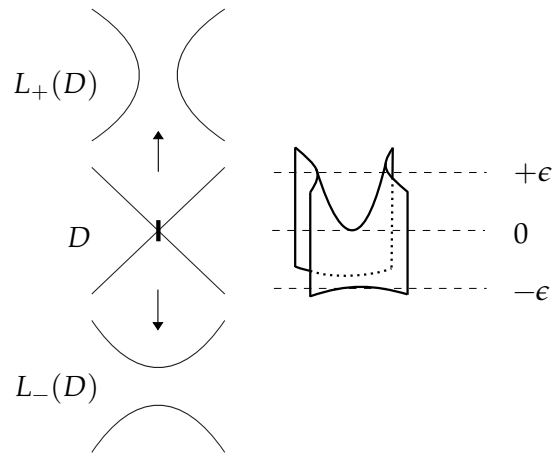

	\begin{center}
		\begin{lpic}[r(0.7cm),l(0.7cm)]{michal_06(5.2cm)}
			\lbl[l]{105,40;$-\epsilon$}
			\lbl[l]{105,55;$0$}
			\lbl[l]{105,70;$+\epsilon$}
			\lbl[r]{2,16;$L_-(D)$}
			\lbl[r]{0,54;$D$}
			\lbl[r]{2,92;$L_+(D)$}
		\end{lpic}
		\caption{Smoothing a marker.\label{michal_06}}
	\end{center}
\end{wrapfigure}

Making a projection in general position of a marked graph to $\mathbb{R}^2\times\{0\}\times\{0\}\subset\mathbb{R}^4$ and assigning types of classical crossings between regular arcs, we obtain a \emph{marked graph diagram}. For a marked graph diagram $D$, we denote by $L_+(D)$ and $L_-(D)$ the classical link diagrams obtained from $D$ by smoothing every vertex as presented in Fig.\;\ref{michal_06} for $+\epsilon$ and $-\epsilon$ case respectively. We call $L_+(D)$ and $L_-(D)$ the \emph{positive resolution} and the \emph{negative resolution} of $D$, respectively.
\par
Any abstractly created marked graph diagram is an \emph{admissible diagram} if and only if both its resolutions are trivial classical link diagrams.
\par
In \cite{Yos94} Yoshikawa introduced local moves on admissible marked graph diagrams that do not change corresponding surface-link types and conjectured that the converse is also true. It was resolved as follows. 

\begin{theorem}[\cite{Swe01}, \cite{KeaKur08}]

Any two marked graph diagrams representing the same type of surface-link are related by a finite sequence of Yoshikawa local moves presented in Fig.\;\ref{michal_01} and a planar isotopy of the diagram.

\end{theorem}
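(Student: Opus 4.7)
The strategy is a parametric refinement of the hyperbolic splitting theorem recalled above. First I would dispatch the easy direction by exhibiting, for each of the ten moves in Fig.\;\ref{michal_01}, an explicit local ambient isotopy of a $4$-ball that realizes the change of hyperbolic splitting depicted by the move. For $\Omega_1$--$\Omega_5$ this amounts to interpreting the classical Reidemeister-like relations as ambient isotopies in $\mathbb{R}^3_0 \times \mathbb{R}$ affecting only the regular arcs of the zero section, while $\Omega_6, \Omega_6', \Omega_7, \Omega_8$ and $\Omega_4'$ correspond to swapping the heights of two saddles, cancelling a saddle against an extremum, or sliding a marker past a strand; in each case one writes down a smooth family of hyperbolic splittings that realizes the local modification.

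For the nontrivial direction, given equivalent surface-links $F_0, F_1$ with chosen hyperbolic splittings inducing diagrams $D_0, D_1$, I would pick an ambient isotopy $\{h_t\}_{t\in[0,1]}$ with $h_1(F_0)=F_1$ and set $F_t = h_t(F_0)$. After a small perturbation of $h_t$ supported away from $t=0,1$, one arranges that the pair consisting of the height function $h_4$ restricted to $F_t$ and the planar projection of its zero section is in general position for all but finitely many parameters $t_1 < \dots < t_N$ in $(0,1)$. Between consecutive critical values, $F_t$ admits a hyperbolic splitting varying smoothly in $t$, and its zero section projects to a marked graph diagram $D_t$ that changes only by planar isotopy; at each $t_i$ a single codimension-one degeneration occurs, and the task reduces to identifying the corresponding local transformation $D_{t_i-\varepsilon}\to D_{t_i+\varepsilon}$ with a move from Fig.\;\ref{michal_01}.

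The codimension-one events split naturally into three families: degenerations of the planar projection of smooth arcs (yielding the Reidemeister-type moves $\Omega_1,\Omega_2,\Omega_3$); an arc or crossing passing through a vertex in the projection (yielding $\Omega_4,\Omega_4',\Omega_5$); and Morse-theoretic degenerations in which two saddle heights coincide or a saddle cancels with an extremum (yielding $\Omega_6,\Omega_6',\Omega_7,\Omega_8$). The core technical step, and the step I expect to be the main obstacle, is a Cerf-style completeness argument: one must prove that after generic perturbation the $1$-parameter family encounters \emph{only} the singularities above, so that no further move is ever needed. A closely related subtlety is that admissibility — both $L_+(D_t)$ and $L_-(D_t)$ remaining trivial classical link diagrams — must be preserved along the whole path; this is handled by showing that the space of admissible hyperbolic splittings of a given surface-link is connected, essentially by applying the existence theorem for hyperbolic splittings parametrically to the ambient isotopy itself, and then reading off the resulting sequence of local models as the required finite word in the Yoshikawa moves.
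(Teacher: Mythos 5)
The paper does not actually prove this statement: it is quoted as a known theorem of Swenton \cite{Swe01} and Kearton--Kurlin \cite{KeaKur08}, so there is no internal proof to compare yours against. Your outline does follow the general strategy of those references (and of the analogous Reidemeister and Roseman completeness arguments): realize each move by a local ambient isotopy for the easy direction, then put a connecting ambient isotopy in general position and classify the codimension-one events. The easy direction as you describe it is unproblematic.

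For the hard direction, however, your proposal has a genuine gap at exactly the step you yourself flag as ``the main obstacle.'' A generic one-parameter family $F_t$ of embedded surfaces does \emph{not} stay within the class of hyperbolic splittings: at isolated parameters the Morse data degenerates in ways that push minima above the level of saddles, merge critical levels, or create and cancel pairs of critical points, and after each such event one must re-normalize to a hyperbolic splitting \emph{and} verify that the induced change of the zero-section diagram is a composite of the ten listed moves. Your claim that the space of admissible hyperbolic splittings of a fixed surface-link is connected ``essentially by applying the existence theorem for hyperbolic splittings parametrically'' assumes precisely what must be proven: the existence theorem (\cite{Lom81}, \cite{KSS82}, \cite{Kam89}) is not a parametric statement, and upgrading it to a statement about paths of splittings is the entire content of the theorem; it is also exactly where the admissibility condition (both resolutions staying trivial throughout) can fail under a naive normalization. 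Kearton and Kurlin handle this by embedding everything into an explicit universal $3$-dimensional polyhedron and enumerating the singularities of the relevant maps there, while Swenton's argument requires a delicate analysis of how the leveling of saddles interacts with births and deaths of extrema; neither reduces to a formal general-position assertion. As written, your proof establishes the easy direction and correctly names, but does not close, the essential difficulty.
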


\begin{figure}[ht]
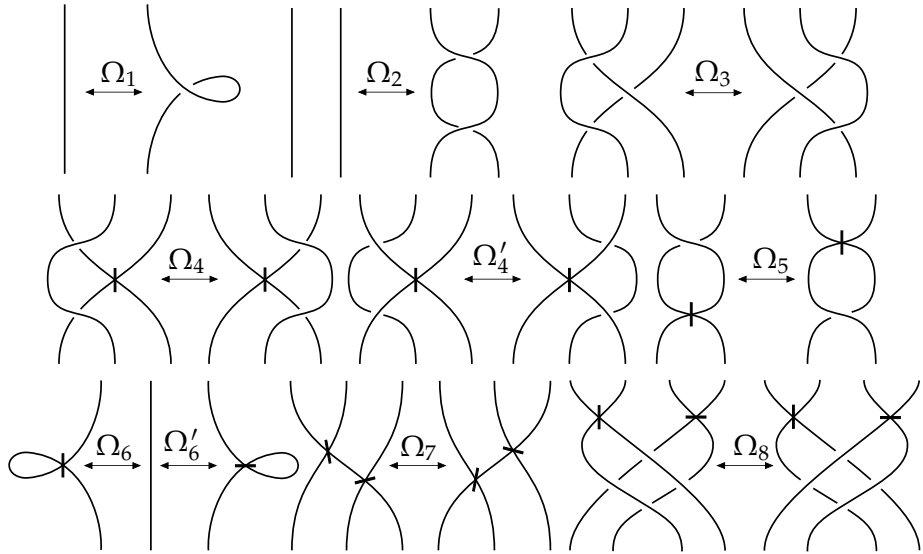

\begin{center}
\begin{lpic}[]{michal_01(12cm)}
	\lbl[b]{23,97;$\Omega_1$}
  \lbl[b]{78,97;$\Omega_2$}
	\lbl[b]{146,97;$\Omega_3$}
	\lbl[b]{37,58;$\Omega_4$}
	\lbl[b]{100,58;$\Omega_4'$}
	\lbl[b]{158,58;$\Omega_5$}
	\lbl[b]{22,19;$\Omega_6$}
	\lbl[b]{36,19;$\Omega_6'$}
	\lbl[b]{85,19;$\Omega_7$}
	\lbl[b]{154,19;$\Omega_8$}
	\end{lpic}
\caption{Yoshikawa moves.\label{michal_01}}
\end{center}
\end{figure}

\begin{theorem}[\cite{JKL13}, \cite{JKL15}]
	Any Yoshikawa move from the set\\ $\{\Omega_1, \Omega_2, \Omega_3, \Omega_6, \Omega_6', \Omega_7\}$ is independent from the other nine types.
\end{theorem}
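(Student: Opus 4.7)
The claim decomposes into six independent statements, one for each move $\Omega\in\{\Omega_1,\Omega_2,\Omega_3,\Omega_6,\Omega_6',\Omega_7\}$: exhibit admissible marked graph diagrams $D_\Omega$ and $D_\Omega'$, related by a single application of $\Omega$, that cannot be connected by any finite sequence of planar isotopies and Yoshikawa moves drawn from the remaining nine. The uniform route is to construct, for each such $\Omega$, a diagrammatic quantity $I_\Omega$ valued in $\mathbb{Z}$, $\mathbb{Z}/n$, or a polynomial ring such that $I_\Omega$ is invariant under every move in $\{\Omega_1,\ldots,\Omega_8,\Omega_4',\Omega_6'\}\setminus\{\Omega\}$ and under planar isotopy, yet takes different values on $D_\Omega$ and $D_\Omega'$; the required independence of $\Omega$ then follows immediately.

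For the classical-type cases I would start with $\Omega_1$ and the writhe $w(D)=\sum_c \operatorname{sgn}(c)$ summed over the non-vertex crossings of $D$. A direct inspection of each local picture in Fig.~\ref{michal_01} shows that $w$ is preserved by every other move in the list, while $\Omega_1$ shifts it by $\pm 1$; any diagram with and without a single positive kink supplies the witnessing pair. For $\Omega_2$ and $\Omega_3$ I would apply a Kauffman bracket type polynomial to the two resolutions $L_+(D)$ and $L_-(D)$, suitably framing corrected, paired with a Gauss-code statistic of those resolutions; the resulting quantity can be calibrated so that exactly one of $\Omega_2$ or $\Omega_3$ acts nontrivially on it while every other Yoshikawa move preserves the chosen extraction.

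For the vertex-type moves $\Omega_6$ and $\Omega_6'$, the vertex count $v(D)$ (the number of four-valent markers) is preserved by every move in $\{\Omega_1,\ldots,\Omega_5,\Omega_4',\Omega_7,\Omega_8\}$ but altered by $\Omega_6$ or $\Omega_6'$; coupling $v(D)$ with a writhe correction and with a single bracket polynomial coefficient of $L_\pm(D)$ separates $\Omega_6$ from $\Omega_6'$ as well as from the other nine moves. The most delicate case is $\Omega_7$, which leaves all naive numerical statistics invariant. Here I would build a state-sum polynomial on marked graph diagrams, obtained by summing weighted bracket contributions over all resolutions of markers and classical crossings, with the vertex weights tuned so that the resulting polynomial obeys every Yoshikawa relation except the one encoded by $\Omega_7$; a pair of small admissible witness diagrams, computed explicitly, then exhibits the required invariant change.

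The main obstacle is the $\Omega_7$-invariant: every elementary combinatorial statistic and every standard bracket polynomial already satisfies $\Omega_7$, so a fine-tuned state-sum with non-trivial vertex weights is needed, and verifying its invariance under the other nine Yoshikawa moves amounts to a lengthy but finite local check frame by frame through Fig.~\ref{michal_01}. For the remaining five moves the work is concentrated in these local verifications and in producing explicit admissible witness diagrams whose resolutions $L_\pm$ remain trivial classical links, so that the witnessing pairs genuinely belong to the class of ch-diagrams.
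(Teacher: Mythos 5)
Your overall framework---for each move $\Omega$ build a quantity invariant under the other nine moves and planar isotopy, then exhibit an admissible witness pair on which it differs---is exactly the strategy of \cite{JKL13,JKL15}, which this paper cites rather than reproves, and which the paper mirrors in its own minimality argument for $\mathcal{M}$. The problem is that for four of the six moves you never actually construct the invariant: ``a Kauffman bracket type polynomial \ldots paired with a Gauss-code statistic \ldots calibrated so that exactly one of $\Omega_2$ or $\Omega_3$ acts nontrivially,'' ``coupling $v(D)$ with a writhe correction and with a single bracket polynomial coefficient,'' and ``a state-sum with vertex weights tuned'' are placeholders for the entire content of the theorem. Worse, the specific tools you name cannot work: any bracket-type polynomial of $L_\pm(D)$ is a regular-isotopy invariant of those resolutions and is therefore blind to $\Omega_2$ and $\Omega_3$, and the vertex count $v(D)$ changes under \emph{both} $\Omega_6$ and $\Omega_6'$, so it cannot separate them from each other. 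The invariants that actually do the job here are elementary but different in kind: for $\Omega_6$ (resp.\ $\Omega_6'$) one uses the number of components of $L_+(D)$ (resp.\ $L_-(D)$), which only the corresponding curl move changes, and for $\Omega_7$ one counts components after first rotating every marker and then resolving---compare the functions $f^{M9}$, $f^{M10}$, $f^{M11}$ recalled in the proof of Theorem~\ref{tw:main}. Your claim that $\Omega_7$ forces a delicate state-sum is therefore also off the mark.

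The one invariant you do pin down, the writhe for $\Omega_1$, is flawed: marked graph diagrams present non-orientable surface-links (e.g.\ $\mathbb{P}^2$-knots), so the diagram carries no global orientation and the signed sum $\sum_c \operatorname{sgn}(c)$ is not defined on the class of objects the theorem quantifies over; even restricting to orientable diagrams you would still owe a check that $\Omega_4$, $\Omega_4'$, $\Omega_5$ and $\Omega_8$ preserve the signed count, which is not obvious from ``direct inspection.'' The quantity that works is the parity of the \emph{unsigned} crossing number (the analogue of the paper's $f^{M1}$), since every move other than $\Omega_1$ changes the number of classical crossings by an even amount. Finally, you correctly flag that the witness pairs must be ch-diagrams with trivial resolutions, but you produce none of them; without explicit admissible pairs on which the (correctly defined) invariants differ, the independence claims remain unestablished.
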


\begin{theorem}[\cite{Jab20}]

The Yoshikawa move $\Omega_8$ is independent from the other nine types.

\end{theorem}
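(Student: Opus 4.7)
The plan is to exhibit an integer-valued invariant of admissible marked graph diagrams that is preserved by each of the nine moves $\Omega_1,\Omega_2,\Omega_3,\Omega_4,\Omega_4',\Omega_5,\Omega_6,\Omega_6',\Omega_7$ but is changed by $\Omega_8$. The natural candidate is the Euler characteristic of the surface-link $F_D$ presented by a ch-diagram $D$.

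The first step is to record the combinatorial formula
\[ \chi(F_D) \;=\; c(L_+(D)) + c(L_-(D)) - v(D), \]
where $c(\cdot)$ is the number of components of a classical link diagram and $v(D)$ is the number of $4$-valent marked vertices of $D$. This follows directly from the hyperbolic splitting: taking the realising surface $F'$ with maxima, minima and saddles concentrated in $\mathbb{R}^3_1$, $\mathbb{R}^3_{-1}$ and $\mathbb{R}^3_0$ respectively, the components of $L_+(D)$, of $L_-(D)$, and the marked vertices of $D$ are in bijection with those critical points, so the identity reduces to the Morse formula $\chi(F_D) = $ (maxima) $+$ (minima) $-$ (saddles).

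The second step is to verify that each of the nine moves preserves $\chi$. A conceptual shortcut is to invoke Yoshikawa's original argument \cite{Yos94} that these nine local moves all realise ambient isotopies of $F_D$ in $\mathbb{R}^4$, so the surface-link class and hence $\chi$ is automatically preserved. Alternatively one inspects each tile of Fig.\;\ref{michal_01} directly: $\Omega_1,\Omega_2,\Omega_3$ involve no markers and modify $L_\pm(D)$ by the corresponding classical Reidemeister moves, which preserve component counts; $\Omega_4,\Omega_4',\Omega_5$ slide a marker past a crossing or another marker, leaving $v$ fixed and producing balanced Reidemeister-type changes on each resolution; in $\Omega_6,\Omega_6'$ a marker is absorbed into (or created from) a small kink, dropping $v$ by one and exactly one of $c(L_\pm)$ by one, so the contributions cancel; and in $\Omega_7$ two markers are created or annihilated on a local bigon whose two smoothings contribute two matching components on one side, again cancelling in $c(L_+)+c(L_-)-v$.

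The final step is to show that $\Omega_8$ changes $\chi$ by exactly $\pm 2$. The move $\Omega_8$ inserts or deletes an isolated marked component of the diagram that presents a trivial $\mathbb{S}^2$ summand of $F_D$: that component contributes $1$ to $v$, while its two smoothings contribute $1$ and $2$ components to $L_+$ and $L_-$ (or the symmetric pair), hence exactly $2 = \chi(\mathbb{S}^2)$ to $\chi$. Since the nine moves preserve $\chi$ while a single $\Omega_8$ shifts it by $\pm 2$, no finite sequence of moves from $\{\Omega_1,\ldots,\Omega_7,\Omega_4',\Omega_6'\}$ can replace an $\Omega_8$ on any ch-diagram, which is the asserted independence. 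I expect the main obstacle to be the local bookkeeping in $\Omega_6$, $\Omega_6'$ and $\Omega_7$, where marker deletions and smoothings interact nontrivially, but each case is a short finite verification and is in any case bypassed by the ambient-isotopy remark above.
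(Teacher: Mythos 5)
Your proposal fails at its central step: the claim that $\Omega_8$ changes the Euler characteristic. All ten Yoshikawa moves, including $\Omega_8$, are local moves between diagrams presenting the \emph{same} surface-link type --- this is the easy direction of Yoshikawa's work, restated in this paper (``Yoshikawa introduced local moves on admissible marked graph diagrams that do not change corresponding surface-link types''). Consequently every genuine invariant of the surface-link, and in particular $\chi(F_D)=c(L_+(D))+c(L_-(D))-v(D)$ (your formula is correct), is preserved by $\Omega_8$ exactly as it is by the other nine moves, so it cannot detect independence. You have also misidentified the move: $\Omega_8$ is the band-pass type move involving two marked vertices, not the insertion of an isolated marked component; the operation you describe would add a split spherical component and change the link type, so it could not be a Yoshikawa move in the first place.

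The structural point you are missing is that independence of a type-preserving move can never be certified by an invariant of the presented surface-link; it requires a \emph{semi-invariant} --- a quantity attached to the diagram itself that is preserved by the other nine moves but not by $\Omega_8$ --- together with a pair of ch-diagrams of one and the same surface-link on which that quantity differs. This is precisely the scheme used in \cite{Jab20} for $\Omega_8$ (via the translation to links with bands and the band-pass move) and mirrored in the present paper's minimality argument, where each $f^k$ is a diagram-level semi-invariant paired with diagrams $D_1^k, D_2^k$ of equivalent surface-links with $f^k(D_1^k)\neq f^k(D_2^k)$. Your first two steps (the Morse-theoretic formula for $\chi$ and its invariance under the nine moves) are fine but prove nothing beyond what invariance under all ten moves already gives.
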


\subsection{Links with bands}

A \emph{band} on a link $L$ is an image of an embedding $b:I\times I\to\mathbb{R}^3$ intersecting the link $L$ precisely in the subset $b(\partial I\times I)$, where $I$ the closed unit interval. A \emph{link with bands} $LB$ in $\mathbb R^3$ is a pair $(L, B)$ consisting of a link $L$ in $\mathbb R^3$ and a finite set $B=\{b_1, \dots, b_n\}$ of pairwise disjoint $n$ bands spanning $L$.

\begin{wrapfigure}{r}{0.5\textwidth}
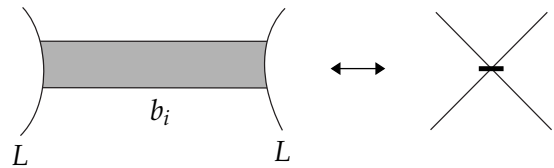

	\begin{center}
		\begin{lpic}[b(0.5cm)]{michal_07(7cm)}
			\lbl[t]{30,8;$b_i$}
			\lbl[t]{0,-2;$L$}
			\lbl[t]{56,-1;$L$}
		\end{lpic}
		\caption{A band corresponding to a marked vertex.\label{michal_07}}
	\end{center}
\end{wrapfigure}

By an ambient isotopy of $\mathbb R^3$, we shorten the bands of a link with bands $LB$ so that each band is contained in a small $2$-disk. Replacing the neighborhood of each band with the neighborhood of a marked vertex as in Fig.\;\ref{michal_07}, we obtain a marked graph, called a \emph{marked graph associated with} $LB$.
\par
Conversely, when a marked graph $G$ in $\mathbb R^3$ is given, by replacing each marked vertex with a band as in Fig.\;\ref{michal_07}, we obtain a link with bands $LB(G)$, called a \emph{link with bands associated with} $G$. 
\par
Let $D$ be an admissible diagram with associated link with bands $LB(D)=(L,B)$, $L=L_-(D)$, $B=\{b_1, \dots, b_n\}$ and $\Delta_1,\ldots,\Delta_a \subset \mathbb{R}^3$ be mutually disjoint $2$-disks with $\partial(\cup_{j=1}^a\Delta_j)= L_+(D)$, and let $\Delta_1',\ldots,\Delta_b' \subset \mathbb{R}^3$ be mutually disjoint $2$-disks with $\partial(\cup_{k=1}^b\Delta_k')= L_-(D)$. We define $S(D) \subset \mathbb{R}^3 \times \mathbb{R} = \mathbb{R}^4 $ a \emph{surface-link corresponding to a diagram} $D$ by the following cross-sections.

$$
(\mathbb{R}^3_t, S(D) \cap \mathbb{R}^3_t)=\left\{%
\begin{array}{ll}
(\mathbb R^3, \emptyset) & \hbox{for $t > 1$,}\\
(\mathbb R^3, L_+(D) \cup (\cup_{j=1}^a\Delta_j)) & \hbox{for $t = 1$,} \\
(\mathbb R^3, L_+(D)) & \hbox{for $0 < t < 1$,} \\
(\mathbb R^3, L_-(D) \cup (\cup_{i=1}^n b_i)) & \hbox{for $t = 0$,} \\
(\mathbb R^3, L_-(D)) & \hbox{for $-1 < t < 0$,} \\
(\mathbb R^3, L_-(D) \cup (\cup_{k=1}^b\Delta_k')) & \hbox{for $t = -1$,} \\
(\mathbb R^3, \emptyset) & \hbox{for $ t < -1$.} \\
\end{array}
\right.
$$

It is known that the surface-link type of $S(D)$ does not depend on choices of trivial disks (cf. \cite{KSS82}). It is straightforward from the construction of $S(D)$ that $D$ is a marked graph diagram presenting $S(D)$. For more material on this topic refer to \cite{Kam17}.

\section{Surface bonded link diagrams}

Let $L$ be an oriented link in $\mathbb{R}^3$, let $\mathcal{B} = \{\mathbf{b}_1, \mathbf{b}_2,\ldots, \mathbf{b}_n\}$ be a set of \emph{bonds} (closed intervals) properly embedded into $\mathbb{R}^3\backslash L$ and let $\chi:\mathcal{B} \rightarrow \mathbb{Z}$ be any function, called here a \emph{coloring function}. A \emph{bonded link diagram} is a regular projection of $L$ and the bonds to a plane with information of over/under-crossings and the coloring (i.e. the value of the coloring function). For more on bonded link diagrams see \cite{Gab19} and \cite{Kau89}.

\begin{wrapfigure}{r}{0.5\textwidth}
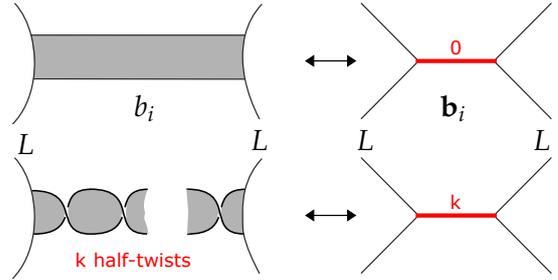

	\begin{center}
		\begin{lpic}[b(0.5cm)]{michal_08(7.1cm)}
			\lbl[t]{30,42;$b_i$}
			\lbl[t]{3,33;$L$}
			\lbl[t]{56,34;$L$}
			\lbl[t]{100,42;$\mathbf{b}_i$}
			\lbl[t]{80,34;$L$}
			\lbl[t]{120,34;$L$}
		\end{lpic}
		\caption{A band and a bond.\label{michal_08}}
	\end{center}
\end{wrapfigure}

A \emph{surface bonded link diagram} $D=(L,\mathcal{B})$ is a bonded link diagram such that replacing each bond with $k$-times half-twisted band (see Fig.\;\ref{michal_08}), both links $L_+(D')$ and $L_-(D')$ are unknotted and unlinked classical diagrams, where $D'$ is a marked graph associated with $L\mathcal{B}$. So the coloring function here values a bond with the half-twisting of the corresponding band. We call this replacement a \emph{bandaging}.
\par
The reverse transformation we call an \emph{unbandaging} (when there are negative half-twists in a band we count each of them as $-1$ half-twisting).

\subsection{Flat forms of surface bonded link diagrams}

By analogy to the flat forms of links with bands $LB$ defined in \cite{Jab16}, we can define a \emph{flat forms of surface bonded link diagrams} as a diagrams where the components of the link $L$ are embedded circles (without crossings between them) in the plane of the diagram.
\par
The flat form of surface bonded link diagrams for a surface-link $F$ is especially useful for reading a presentation of the surface-link group, i.e. $\pi_1(\mathbb{R}^4\backslash \text{int}(N(F)))$ where $N(F)$ is a tubular neighborhood of $F$. It is because we neither have relations from crossing between links (i.e. link-link crossings), as we do not have them, nor we have relations from crossings between bonds (i.e. bond-bond crossings) as they do not contribute to new relations. Therefore, the interesting here are only tree-valent vertices and crossings between links and bonds (i.e. link-bond crossings). In Table\;\ref{tab1} we derive flat forms of surface bonded link diagrams of every nontrivial surface-link from Yoshikawa table \cite{Yos94}.

\subsection{Proof of Theorem \ref{tw:main}}

\begin{proof}[Proof]
	
First, notice that bandaging all bonds in the moves from the set $\mathcal{M}=\{M1,\ldots, M12\}$ (with appropriate twisting) and allowing the diagrams to isotope in $\mathbb{R}^3$ we obtain a set of four moves: cup move, cap move, band-slide, band-pass on a link with bands (see \cite{Jab20} for more details and proof of their minimality). Therefore, our set $\mathcal{M}$ contains only those moves that do not change the corresponding surface-link type.  
\par
Now we prove that the moves from the set $\mathcal{M}$ on surface bonded planar diagrams generates Yoshikawa moves on marked graph diagrams. It is sufficient to derive all moves from the set  $\Omega=\{\Omega_1, \ldots, \Omega_8, \Omega_4',\Omega_6'\}$ by the moves from $\mathcal{M}$ (and performing bandaging/unbandaging operations). But first we have to make sure that at any time we can make a surface bonded link diagram prepared to make a Yoshikawa move. We do this by moves $M1, \ldots, M8$  making all bonds do not intersect any other bond or link (except for their ends) and have coloring zero. Then contract the bond to a four-valent crossing with marker. 
\par
The moves $\Omega_1, \Omega_2, \Omega_3$ are equivalent to the moves $M1, M2, M3$ (see also \cite{Pol10}). The moves $\Omega_6, \Omega_6', \Omega_7$ are easily obtained by the moves  $M9, M10, M11$ respectively, simply by operations of exchanging markers with bands and bandaging/unbandaging operations. The remaining moves $\Omega_4, \Omega_4', \Omega_5, \Omega_8$ are obtained as shown in Fig.\;\ref{michal_10}.

\begin{figure}[ht]
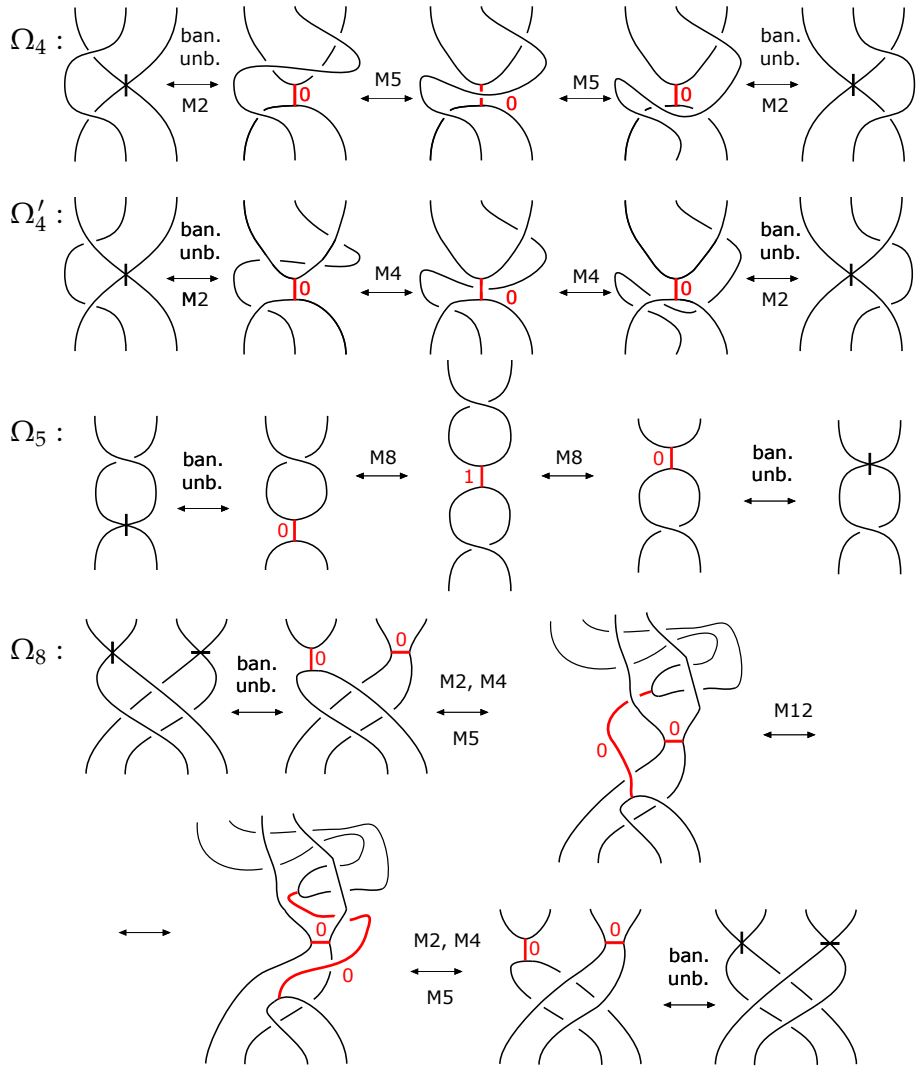

	\begin{center}
		\begin{lpic}[l(0.85cm)]{michal_10(11.2cm)}
			\lbl[r]{0,235;$\Omega_4:$}
			\lbl[r]{0,195;$\Omega_4':$}
			\lbl[r]{0,145;$\Omega_5:$}
			\lbl[r]{0,95;$\Omega_8:$}
		\end{lpic}
		\caption{Realization of the moves $\Omega_4, \Omega_4', \Omega_5, \Omega_8$.\label{michal_10}}
	\end{center}
\end{figure}

Now we prove the minimality of elements of the set $\mathcal{M}$. To obtain this task it is sufficient to construct twelve semi-invariants $f^k$ such that they preserve their values after performing each move from the set $\mathcal{M}\backslash \{k\}$, where $k\in\mathcal{M}$; and construct twelve pairs of diagrams $D_1^k, D_2^k$ of equivalent surface-links such that $f^k(D_1^k)\not=f^k(D_2^k)$.
\par
In the case where $k\in\{M3, M9, M10, M11, M12\}$ the semi-invariant $f^k$ can be picked the same as in \cite{JKL15} and \cite{Jab20} after making bandaging on their zero-colored bonds. Recall here the shortest two functions to define: function $f^{M9}$ counts the number of link components after positive resolution of each band. Function $f^{M10}$ counts the number of link components after negative resolution of each band. Function $f^{M11}$ counts the number of link components after adding one to all bond color values and then positive resolution of each band.
\par
We now define the remaining seven functions. 
\par
Define $f^{M1}$ as a function counting the parity of the sum of classical crossings and colors of bonds. Define $f^{M2}$ as a function that counts the number of connected components of the planar graph (with valency $3$ or $4$) obtained from a surface bonded link diagram by omitting over-under information of all link-link, link-bond and bond-bond crossings. Define $f^{M4}$ as a function counting the parity of the number of crossings between bonds and classical links such that a bond is higher than a link. Define $f^{M5}$ as a function counting the parity of the number of crossings between bonds and classical links such that a bond is lower than a link. Define $f^{M8}$ as a function counting the sum of colors of every bond. Cases $M6$ and $M7$ are more complicated.
\par
For each bond $\mathbf{b}_i$ if we travel along this bond and meet two crossings (possibly non-consecutive) such that in both crossings the bond $\mathbf{b}_i$ goes over other bond-strands $\mathbf{b}_j$, $\mathbf{b}_k$ (possibly $j=k$) define the two crossings to be \emph{a bond under-crossing pair for $\mathbf{b}_i$}. When moreover, traveling along two under-crossing stands of $\mathbf{b}_j, \mathbf{b}_k$ the mentioned crossings are between a bond under-crossing pair for both $\mathbf{b}_j, \mathbf{b}_k$ define the two crossings to be \emph{a blocked bond under-crossing pair} of $\mathbf{b}_i$.
\par
Similarly we define \emph{a blocked bond over-crossing pair}, switching words "over" with "under" in the above definition. Define $f^{M6}$ to be the number of bonds in the diagram that has blocked bond under-crossing pair. Define $f^{M7}$ to be the number of bonds in the diagram that have at least one blocked bond over-crossing pair.
\par
It is straightforward to check that the above functions are well-defined and have the desired property of being semi-invariants in respect to appropriate moves.
\par
To finish the proof we show in Table\;\ref{tab2} twelve pairs of diagrams such that to transform the diagram $D_1^k$ to the diagram $D_2^k$ by a planar isotopy and moves from $\mathcal{M}$ one have to use the move of type $k$. 

\end{proof}

From the moves in $M$ we can easily derive useful moves with a general colors of bonds as in Fig.\ref{michal_03}.

\begin{figure}[ht]
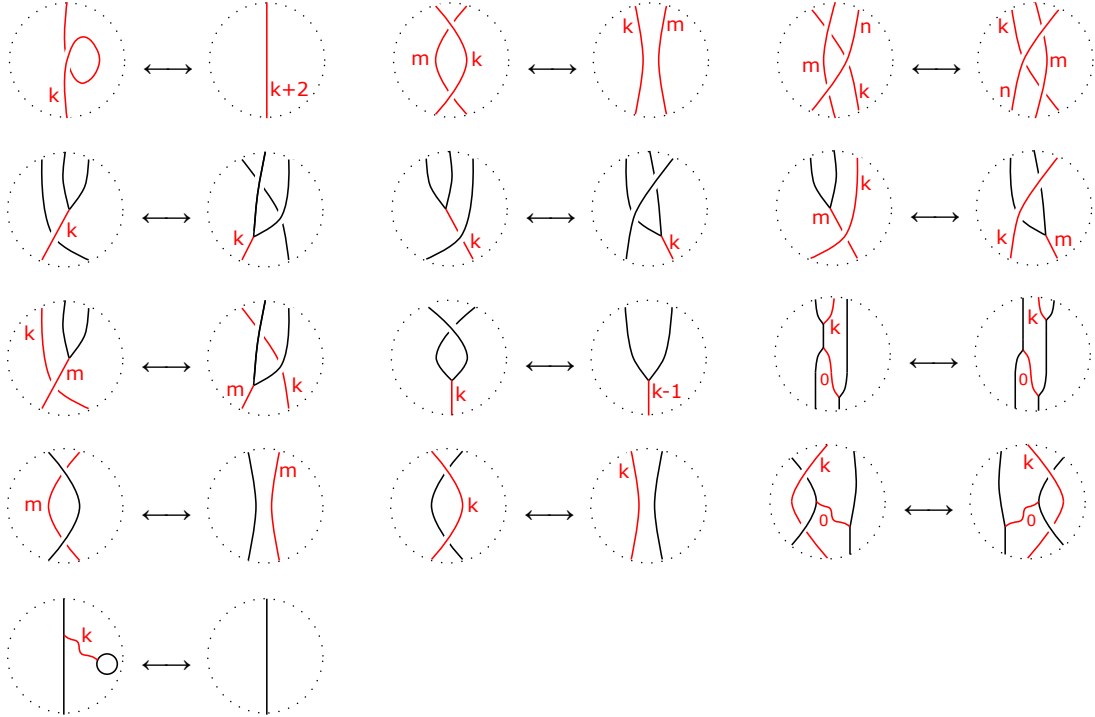

	\begin{center}
		\begin{lpic}[]{michal_03(14.3cm)}
			
		\end{lpic}
		\caption{Derived moves on surface bonded links.\label{michal_03}}
	\end{center}
\end{figure}

\subsection{Unknotted surface-links}

An orientable surface-link in $\mathbb{R}^4$ is \emph{unknotted} (or \emph{trivial}) if it is equivalent to a surface embedded in $\mathbb{R}^3\times\{0\}\subset\mathbb{R}^4$. A surface bonded link diagram for an unknotted standard $2$-knot is shown in Fig.\;\ref{michal_04}(a), an unknotted standard $\mathbb{T}^2$-knot is in Fig.\;\ref{michal_04}(d).
\par
A $\mathbb{P}^2$-knot in $\mathbb{R}^4$ is \emph{unknotted} if it is equivalent to a surface whose surface bonded link diagram is an unknotted \emph{standard projective plane}, which looks like in Fig.\;\ref{michal_04}(b) that is a \emph{positive} $\mathbb{P}^2_+$ or looks like in Fig.\;\ref{michal_04}(c) that is a \emph{negative} $\mathbb{P}^2_-$. 

\begin{wrapfigure}{r}{0.45\textwidth}
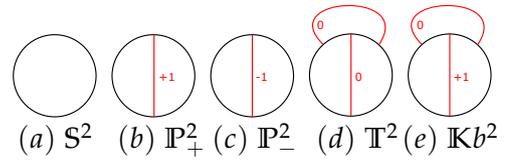

	\begin{center}
		\begin{lpic}[b(0.5cm)]{michal_04(6.3cm)}
			\lbl[t]{16,-2;$(a)\;\mathbb{S}^2$}
			\lbl[t]{56,-2;$(b)\;\mathbb{P}^2_+$}
			\lbl[t]{91,-2;$(c)\;\mathbb{P}^2_-$}
			\lbl[t]{130,-2;$(d)\;\mathbb{T}^2$}
			\lbl[t]{165,-2;$(e)\;\mathbb{K}b^2$}
		\end{lpic}
		\caption{Examples of the unknotted surface-knots.\label{michal_04}}
	\end{center}
\end{wrapfigure}

A non-orientable surface-knot is \emph{unknotted} if it is equivalent to some finite connected sum of unknotted $\mathbb{P}^2$-knot (see for example Klein bottle $\mathbb{K}b^2=\mathbb{P}^2_+\#\mathbb{P}^2_-$ in Fig.\;\ref{michal_04}(e)). 
A non-orientable surface-link is \emph{unknotted} if it is equivalent to some split unions of finitely many unknotted non-orientable surface-knots and (possibly empty) set of orientable surface-links. Diagrams in Table\;\ref{tab2} are all diagrams on unknotted surface-links.
\par
E.C. Zeeman in \cite{Zee65} generalized E. Artin spinning construction to the twist-spinning construction creating a smooth $2$-knot in $\mathbb{R}^4$ from a given smooth classical knot $K$. A marked graph diagram for any $n$-twist spun knot $K$ is given in \cite{Mon86}.
\par
In Fig.\;\ref{film_all_1} we see transformations between a diagram of the $1$-twist spun trefoil (defined as a closure of a braid $a_2c_1^3b_2c_1^{-3}\Delta^2$ see \cite{Jab13}) and the trivial sphere diagram (we do not show moves $M1, \ldots , M8$ as they can be easily obtained in $\mathbb{R}^3$).

\begin{figure}[ht]
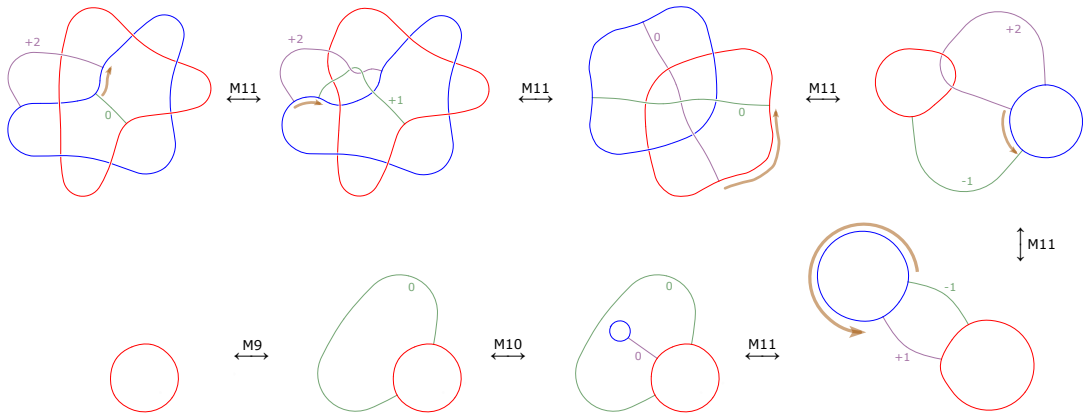

	\begin{center}
		\begin{lpic}[]{film_all_1(14.3cm)}
			
		\end{lpic}
		\caption{Unknotting the $1$-twist spun trefoil without using $M12$ type move.\label{film_all_1}}
	\end{center}
\end{figure}

\begin{figure}[ht]
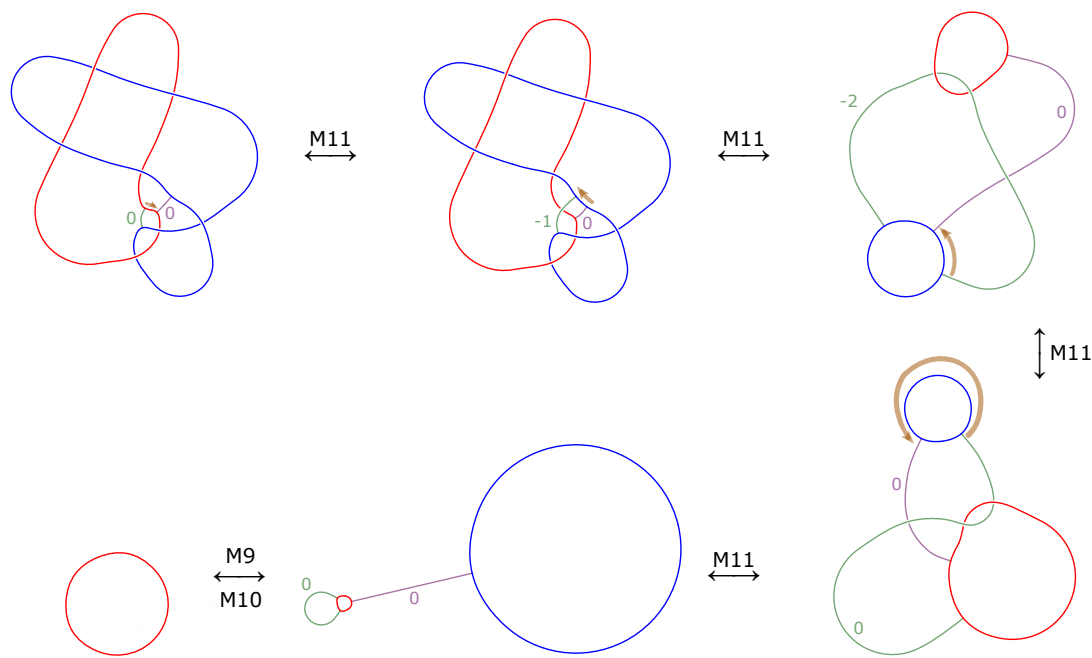

	\begin{center}
		\begin{lpic}[]{film_all_2(14.3cm)}
			
		\end{lpic}
		\caption{Unknotting the minimal hard prime surface-unlink diagrams without using $M12$ type move.\label{film_all_2}}
	\end{center}
\end{figure}

In Fig.\;\ref{film_all_2} we see transformations between the minimal hard marked sphere diagram (defined as a diagram $9_{\{2,38\}}^{\{1,2,\text{Ori}\}}$ in \cite{Jab19}) and the trivial sphere diagram. (we again do not show moves $M1, \ldots , M8$). It is natural then to consider the following.

\begin{question}
Are every two diagrams of the standard $2$-knot related by a planar isotopy and moves $M1, \ldots , M11$ (i.e. do not require $M12$ move in a transformation)?
\end{question}

\begin{table}
	\caption{Diagrams for showing independence of moves $M_1, \ldots, M12$.}
	\label{tab2}
	\begin{tabular}{cc|cc|cc}
		&&&&& \\
		\includegraphics[width=0.7cm]{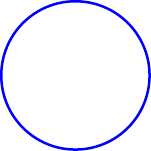}&
		\includegraphics[width=2cm]{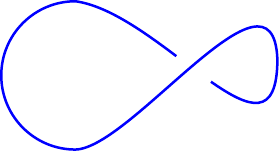}&
		\includegraphics[width=1.5cm]{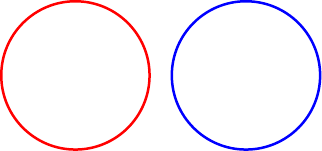}&
		\includegraphics[width=1.5cm]{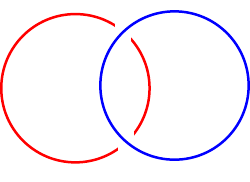}&
		\includegraphics[width=1.5cm]{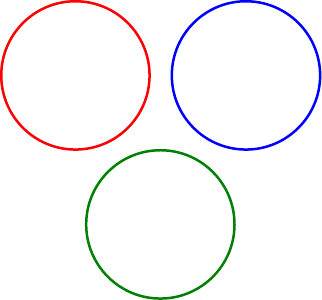}&
		\includegraphics[width=1.5cm]{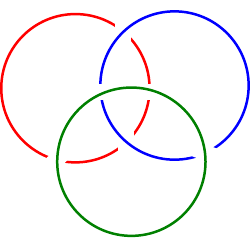}\\
		$D_1^{M1}$&$D_2^{M1}$&$D_1^{M2}$&$D_2^{M2}$&$D_1^{M3}$&$D_2^{M3}$\\&&&&&\\\hline&&&&& \\
		
		\includegraphics[width=0.7cm]{D_1_M_01}&
		\includegraphics[width=2cm]{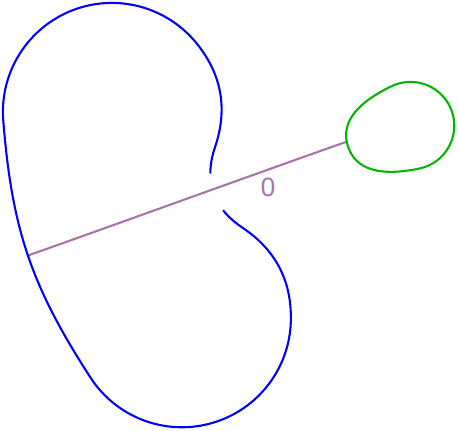}&
		\includegraphics[width=0.7cm]{D_1_M_01}&
		\includegraphics[width=2cm]{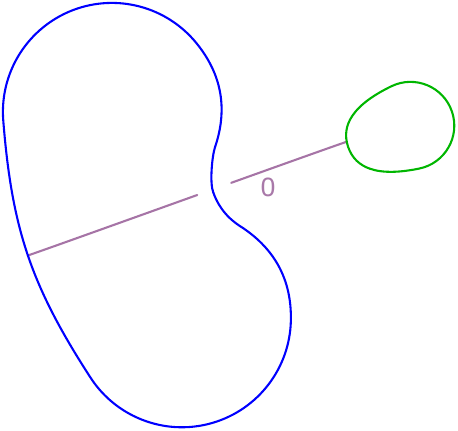}&
	    \includegraphics[width=1.5cm]{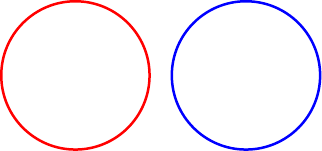}&
		\includegraphics[width=2cm]{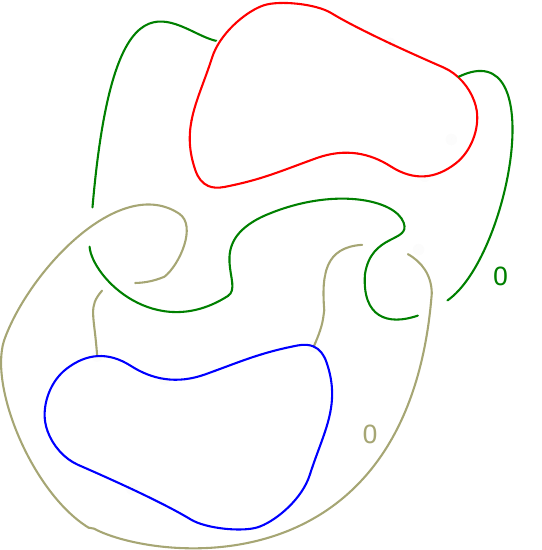}\\
		
		$D_1^{M4}$&$D_2^{M4}$&$D_1^{M5}$&$D_2^{M5}$&$D_1^{M6}$&$D_2^{M6}$\\&&&&&\\\hline&&&&& \\
		
		\includegraphics[width=1.7cm]{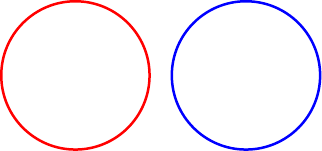}&
		\includegraphics[width=2cm]{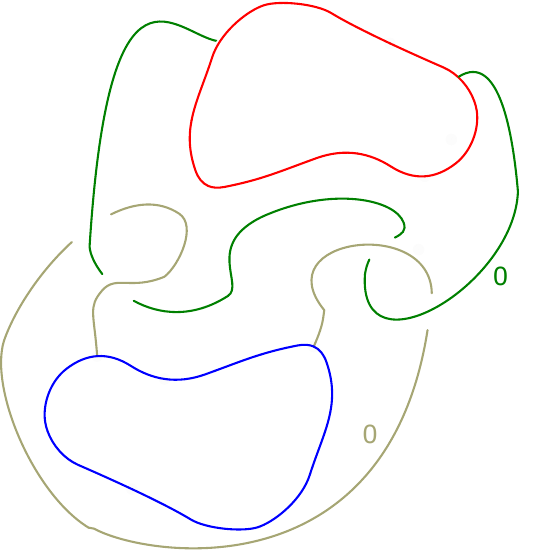}&
		\includegraphics[width=0.7cm]{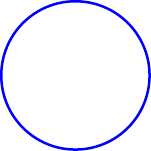}&
		\includegraphics[width=1.2cm]{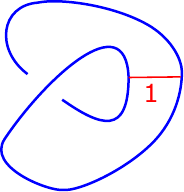}&
		\includegraphics[width=0.7cm]{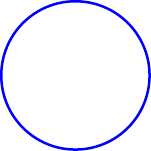}&
		\includegraphics[width=1.5cm]{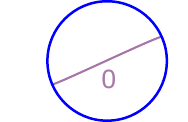}\\
		
		$D_1^{M7}$&$D_2^{M7}$&$D_1^{M8}$&$D_2^{M8}$&$D_1^{M9}$&$D_2^{M9}$\\&&&&&\\\hline&&&&& \\
		
		\includegraphics[width=0.7cm]{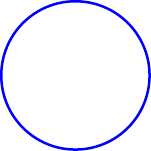}&
		\includegraphics[width=2cm]{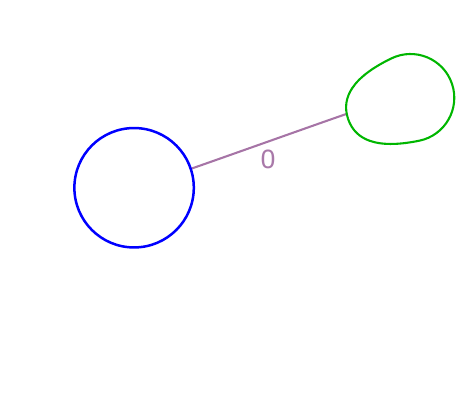}&
		\includegraphics[width=0.7cm]{D_2_M_10}&
		\includegraphics[width=2.2cm]{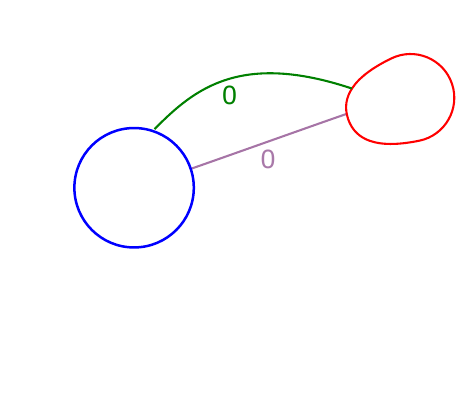}&
	
		\includegraphics[width=2.2cm]{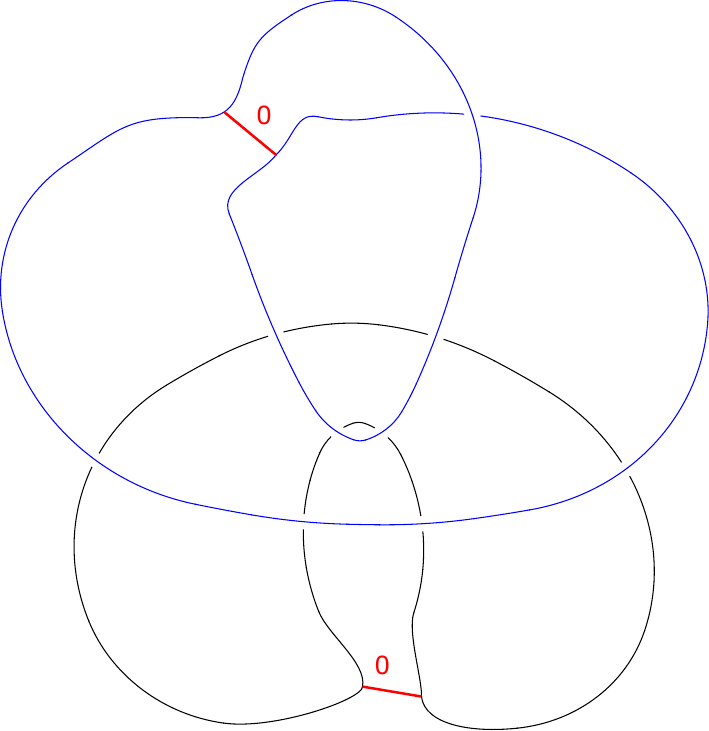}&
		\includegraphics[width=2.2cm]{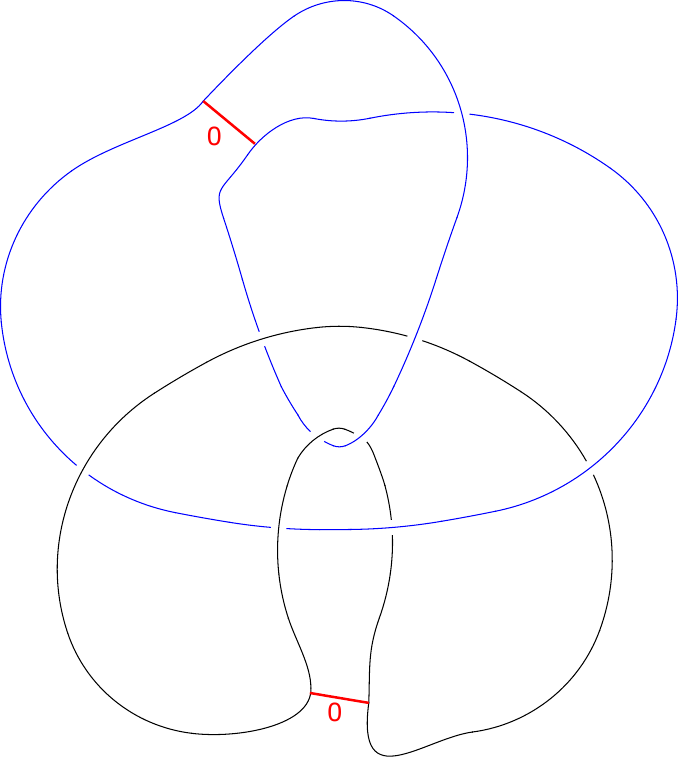}\\
		$D_1^{M10}$&$D_2^{M10}$&$D_1^{M11}$&$D_2^{M11}$&$D_1^{M12}$&$D_2^{M12}$\\&&&&&\\\hline
		
	\end{tabular}
\end{table}	

\begin{table}
	\caption{Nontrivial surface-links in flat form with ch-index $\leqslant 10$.}
	\label{tab1}
	\begin{tabular}{cccc}
		&&& \\
		\includegraphics[width=3.2cm]{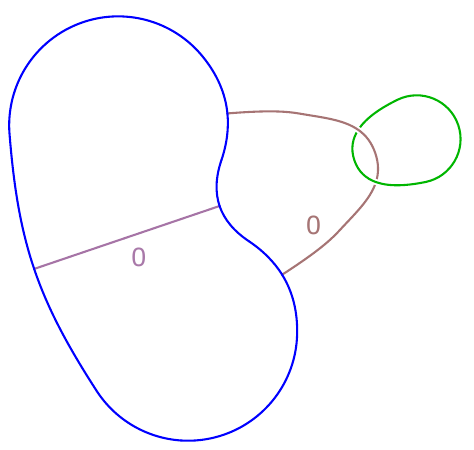}&
		\includegraphics[width=3.2cm]{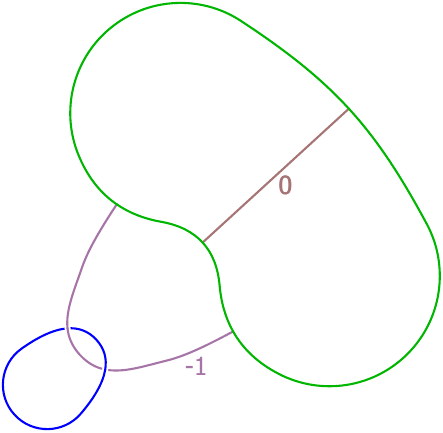}&
		\includegraphics[width=3.2cm]{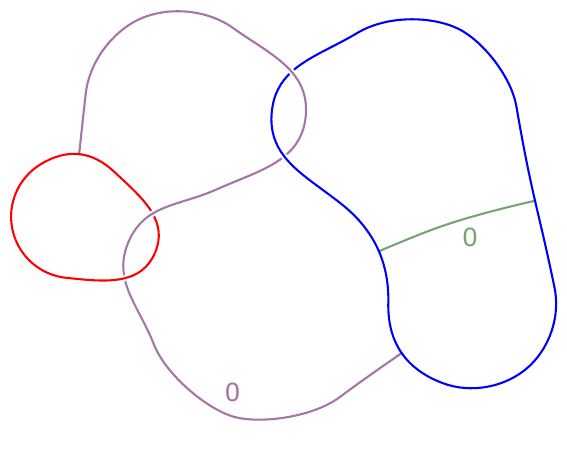}&
		\includegraphics[width=3.2cm]{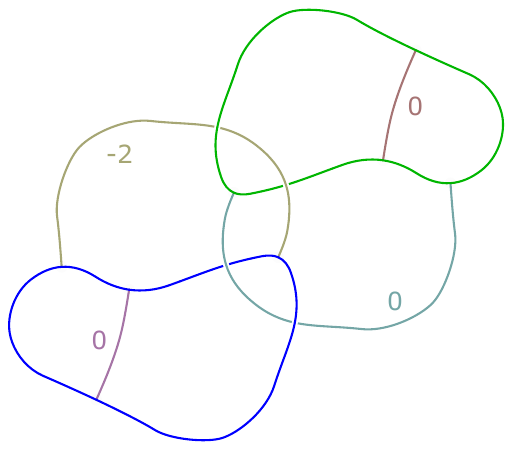}\\
		$6_1^{0,1}$&$7_1^{0,-2}$&$8_1$&$8_1^{1,1}$\\
		\includegraphics[width=3.2cm]{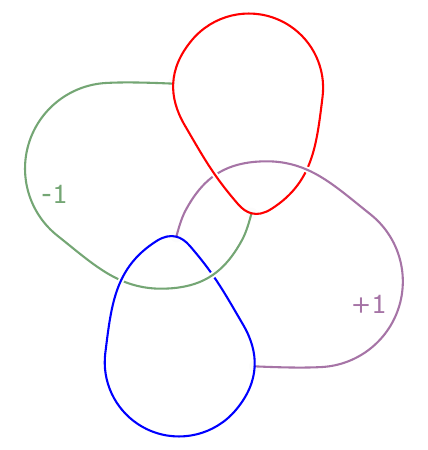}&
		\includegraphics[width=3.2cm]{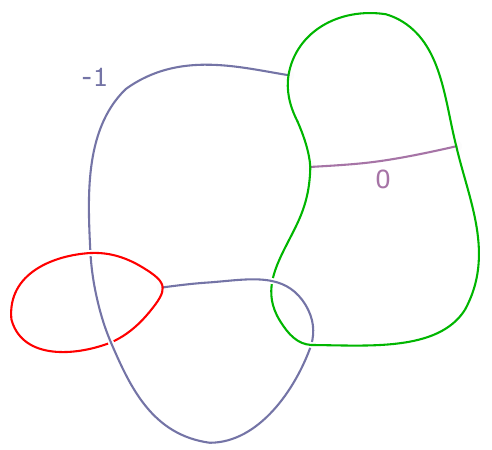}&
		\includegraphics[width=3.2cm]{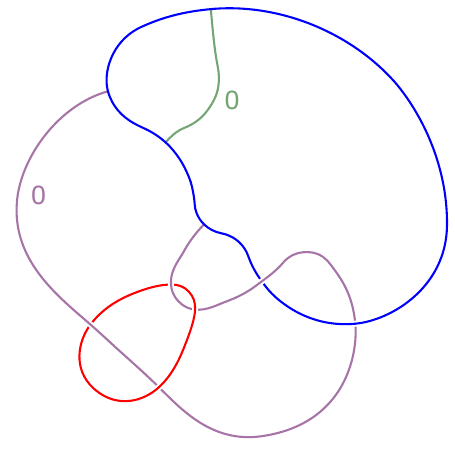}&
		\includegraphics[width=3.2cm]{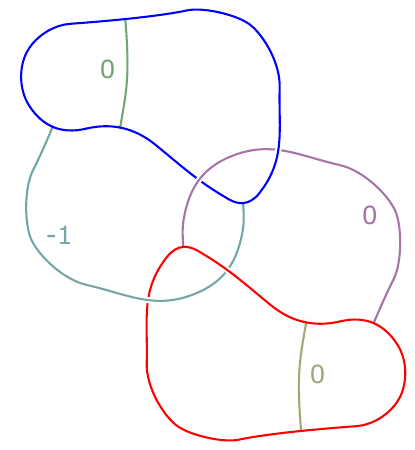}\\
		$8_1^{-1,-1}$&$9_1$&$9_1^{0,1}$&$9_1^{1,-2}$\\		
		\includegraphics[width=3.2cm]{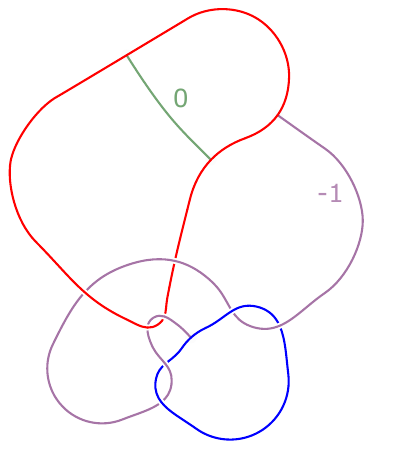}&
		\includegraphics[width=3.2cm]{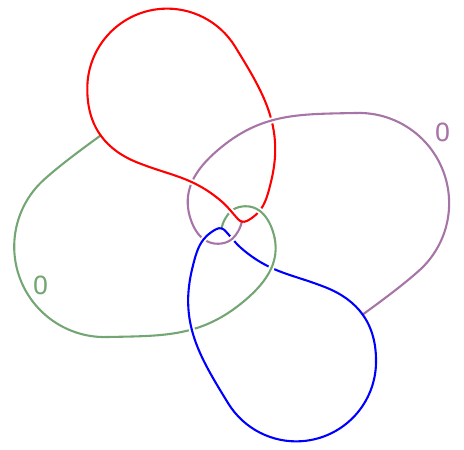}&
		\includegraphics[width=3.2cm]{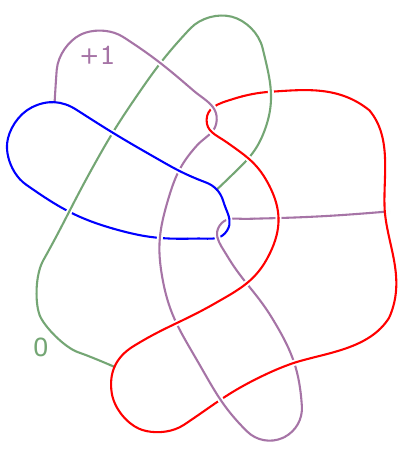}&
		\includegraphics[width=3.2cm]{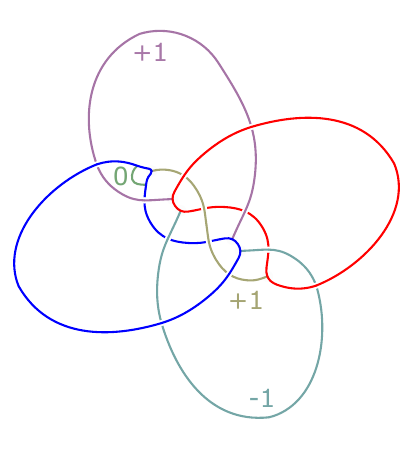}\\
		$10_1$&$10_2$&$10_3$&$10_1^1$\\	
		\includegraphics[width=3.2cm]{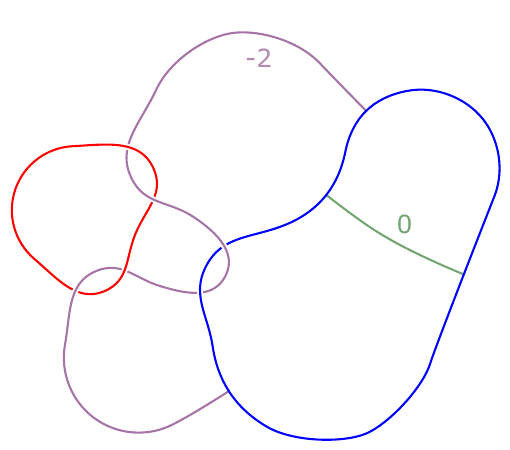}&
		\includegraphics[width=3.2cm]{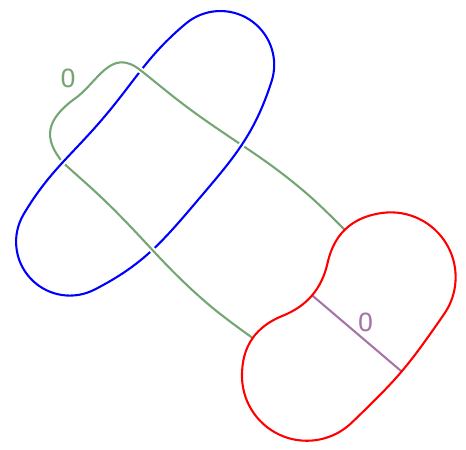}&
		\includegraphics[width=3.2cm]{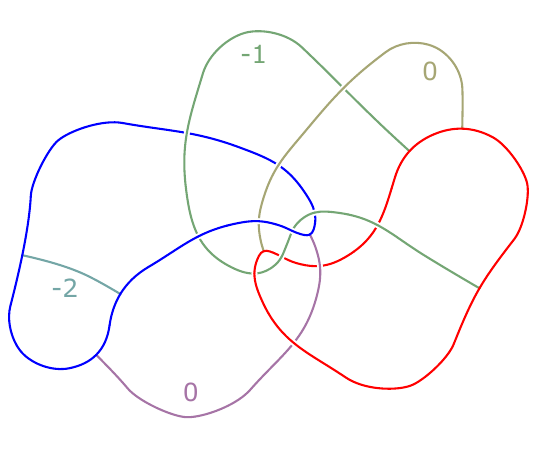}&
		\includegraphics[width=3.2cm]{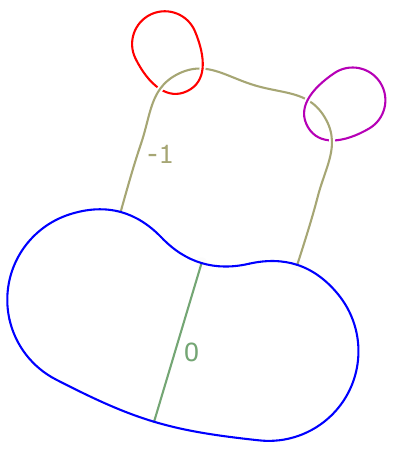}\\
		$10_1^{0,1}$&$10_2^{0,1}$&$10_1^{1,1}$&$10_1^{0,0,1}$\\		
		\includegraphics[width=3.2cm]{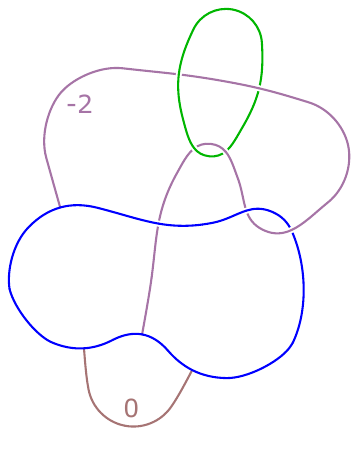}&
		\includegraphics[width=3.2cm]{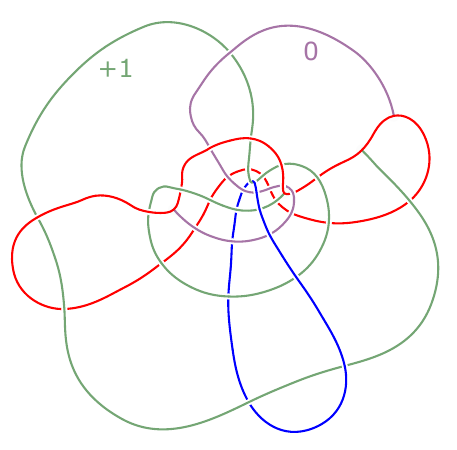}&
		\includegraphics[width=3.2cm]{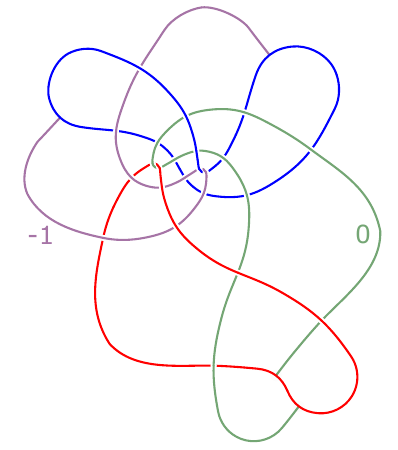}&
		\includegraphics[width=3.2cm]{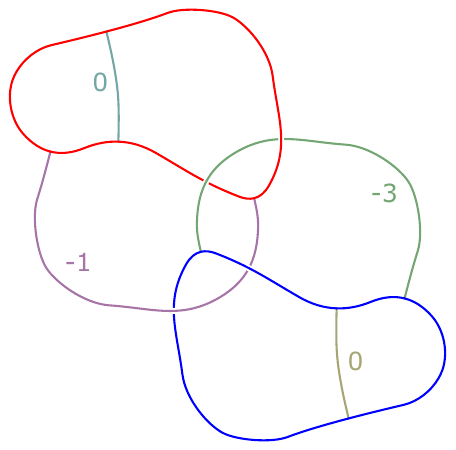}\\
		$10_1^{0,-2}$&$10_2^{0,-2}$&$10_1^{-1,-1}$&$10_1^{-2,-2}$\\
		
	\end{tabular}
\end{table}

{\footnotesize

\subjclass{57K45} 

\keywords{surface bonded link diagrams, marked graph diagrams, surface-link, link with bands, minimal set of moves, trivalent graphs}
}
\end{document}